\documentclass{article}

\usepackage{placeins}
\usepackage{amsmath}
\usepackage{amsfonts}
\usepackage{tikz}
\usetikzlibrary{shapes}
\usepackage{graphicx}
\usetikzlibrary{cd}
\usetikzlibrary{babel}
\usepackage{amsthm}
\usepackage{authblk}

\usepackage{enumitem}
\usepackage{algorithm}
\usepackage{algpseudocode}
\usepackage{hyperref}
\newtheorem{definition}{Definition}
\newtheorem{theorem}{Theorem}

\usepackage[left=2cm, right=2cm, top=3cm]{geometry}

\newtheorem{proposition}{Proposition}

\usepackage{comment}

\title{Two approaches to low-parametric SimRank computation}
\author[1]{E. P. Berezin}
\author[2,3]{R. T. Zaks}
\author[2]{G. Z. Alekhin}
\author[3,4]{S. V. Morozov}
\author[2,3]{S. A. Matveev}
\affil[1]{Lomonosov Moscow State University Branch in the City of Sarov, Sarov, Russia, 607328}
\affil[2]{Lomonosov MSU, Faculty of Computational Mathematics and Cybernetics, Moscow, Russia, 119991}
\affil[3]{Marchuk Institute of Numerical Mathematics RAS, Moscow, 119333, Russia}
\affil[4]{National Research University Higher School of Economics, Moscow, 109028, Russia}

\date{}

\begin{document}
	
 \maketitle

 \begin{abstract} % You shouldn't use formulas and citations in the abstract.
In this work, we discuss low-parametric approaches for approximating SimRank matrices, which estimate the similarity between pairs of nodes in a graph. Although SimRank matrices and their computation require a significant amount of memory, common approaches mostly address the problem of algorithmic complexity. We propose two major formats for the economical embedding of target data. The first approach adopts a non‑symmetric form that can be computed using a specialized alternating optimization algorithm. The second is based on a symmetric representation and Newton-type iterations. We propose numerical implementations for both methodologies that avoid working with dense matrices and maintain low memory consumption. Furthermore, we study both types of embeddings numerically using real data from publicly available datasets. The results show that our algorithms yield a good approximation of the SimRank matrices, both in terms of the error norm (particularly the Chebyshev norm) and in preserving the average number of the most similar elements for each given node.
 \end{abstract}
% \subclass{97N40, 05C50, 05C82, 90B10, 65F99} % Enter 2010 Mathematics Subject Classification.
% \keywords{SimRank, low-rank approximations, quadratic optimization, Newton method, alternating minimization} % Include keywords separated by a comma.

 \section{Introduction}

 The problem of defining similarity between pairs of objects emerges in many applications, such as link prediction, graph clustering, and even molecular ecology \cite{DeSantis2011}. The SimRank is a similarity measure for graph data, proposed by Glen Jeh and Jennifer Widom at \cite{Jeh2002}. For a given graph $(V,E)$ with a set of vertices $V$ and set of edges $E$, each pair of vertices $(V_i, V_j)$ is associated  with a number $s(V_i, V_j) \in \left[0;1\right]$ according to relation 
\begin{equation}
s(V_i, V_j) =
\begin{cases}
1, \text{ if } V_i = V_j, \\
0, \text{ if } |I_i| = 0 \text{ or } |I_j| = 0, \\
\frac{c}{|I_i|\cdot|I_j|}\sum_{k \in I_i} \sum_{p \in I_j} s(V_k, V_p),
\end{cases} \label{eq:simrankorig}
\end{equation}
where $I_i$ is a set of edges incident to $V_i$ and $c$ is an arbitrary constant between 0 and 1. This constant defines the upper limit on similarity between two different vertices, $\max s(V_i,V_j) \leq c$ for $i \neq j$. In many practical computations, $c = 0.8$, and we also use this value in our paper. The intuition lying behind the SimRank is that two objects are similar if they are referenced by similar objects. 

The SimRank is rather well-studied nowadays, but it still has high computational and memory requirements for direct application to large graph-based data. One should note that model \eqref{eq:simrankorig} generates the linear fixed-point problem for the set of  $N(N-1)/2$ variables $s(V_i, V_j)$. The basic equations \eqref{eq:simrankorig} may be easily reformulated in a matrix form (see e.g. \cite{Kusumoto2014}) that is more feasible for organizing the computing process
\begin{equation*}
    S = (c~ A^{\top}SA) \lor I, \label{eq:simrankmatrix}
\end{equation*}
where $\{A \lor B\}_{ij} = \max(A_{ij},B_{ij})$. The matrix formulation can also be expressed as
\begin{equation}
    S = c~ A^{\top}SA + I - c~ \operatorname{diag}(A^{\top}SA), \label{eq:simrankmaineq}
\end{equation}
where $S \in \mathbb{R}^{n \times n} \colon S_{ij} = s(V_i, V_j)$, $S$ has important property: $S_{ii} = 1 ~~ \forall i=1,\dots,n$. $A \in \mathbb{R}^{n \times n}$ is a column-normalized graph adjacency matrix. Thus, $A^{\top}S$ is associated with $\frac{1}{|I_i|}\sum_{k \in I_i} s(V_k, V_p)$ and $SA$ with $\frac{1}{|I_j|} \sum_{p \in I_j} s(V_k, V_p)$ and $I$ is identity matrix of corresponding size. The operator $\operatorname{diag}$ simply takes the diagonal part of a matrix and nullifies its offdiagonal elements. 
We also utilize an operator for the off-diagonal part that will be useful in further calculations:
\begin{equation*}
\operatorname{off}(X) \equiv X - \operatorname{diag}(X).
\end{equation*}
Thus, the initial equation \eqref{eq:simrankmaineq} receives the compact form
\begin{equation}
    S = \underbrace{c~ \operatorname{off} (A^{\top}SA) + I}_{F(S)}. \label{eq:simrankmaineqoff}
\end{equation}
The simplest way to calculate the Simrank matrix is the fixed-point iteration method\footnote{The standard \texttt{simrank\_similarity} function of the popular Python NetworkX package exploits this approach.}  
\begin{equation*}
    S^{(k+1)} = F(S^{(k)}). \label{eq:FPoff}
\end{equation*}
It has some nice properties, such as convergence with linear rate for any nonnegative starting $S^{(0)}$. 

However, the SimRank model has two major drawbacks. The first one is computational complexity that is addressed in many papers. During the two decades since the original paper by Jeh and Widom has been published, several methods reducing the computational cost were born: (a) the pruning techniques \cite{Jeh2002}; (b) utilization of the sparse arithmetic, since adjacency matrix $A$ is usually sparse; (c) selecting the essential pairs of nodes and using partial sums \cite{Lizorkin2008}; (d) regularization and transition to relaxed equations, e.~g. the Sylvester equation \cite{Li2010} or the  discrete Lyapunov equation \cite{Yu2014} that unfortunately contains systematic errors \cite{Oseledets2016, Kusumoto2014}; (e) thresholding methods and GMRES to solve corresponding linear system with better convergence rate \cite{Oseledets2015} but consuming a lot of additional memory for storage of the Krylov bases; (f) the reversed Cuthill-McKee algorithm and the SOR method \cite{Lin2012}.

The second issue with SimRank is its memory requirements. The mentioned algorithms mostly do not deal with the problem of storage of dense $n \times n$ matrix $S$ containing the similarity scores, leading to unacceptable memory requirements in case of big data \cite{Yang2012}. In \cite{Oseledets2016}, an iterative method using the randomized low-rank spectral decomposition at each iteration has been proposed. It allows to reach $O(nr)$ memory cost (where $r$ is the rank of the corresponding decomposition) but struggles with relatively low accuracy of the final approximation.

This paper is organized as follows. We revisit the properties of the fixed-point iteration method in application to the Simrank problem in Section 2. Further, we concentrate on the low-rank parametrizations for the Simrank matrices. In Section 3, we test the low-rank parametrizations for the pre-computed solutions numerically and find that such decompositions exist and lead to high precision in terms of Chebyshev norm. We also discuss that our observations agree with existing theoretical results. In Section 4, we propose and test two possible ways for organizing the iterative process, finding the low-parametric approximation of the Simrank matrices. Finally, in Section 5, we demonstrate that novel algorithms outperform the method from \cite{Oseledets2016}. 

\section{Fixed-point iteration method}

In this section, we revisit the properties of the baseline fixed-point iteration method  \cite{Jeh2002} in application to the SimRank equation in a matrix form. For a given $A \in \mathbb{R}^{m \times n}$, Chebyshev norm $\|*\|_C$ is defined as $\|A\|_C = \max_{i,j} |A_{ij}|$. Remind that a column-stochastic matrix is a nonnegative matrix with sums of elements in its columns equal to 1. Note that $A$ can be considered as a result of the column normalization for the  binary adjacency matrix $\tilde{A}$.

\begin{proposition} \label{diagprop}

For any real $S \in \mathbb{R}^{n \times n}$ and column stochastic $A \in \mathbb{R}^{n \times n}$
\begin{equation*}
\| A^{\top}SA \|_C \leq \|S\|_C.
\end{equation*}

\begin{proof}
We denote $n_k$ as the number of ones in the $k$-th column of $\tilde{A}$, and $E$ is a matrix of ones. Then,
\begin{align*}
& \left| \{A^{\top}SA\}_{ij} \right| = 
\left| a_i^{\top}Sa_j \right|  =
 \frac{1}{ \|\tilde{a}_i\|_1 \|\tilde{a}_j\|_1 }  \tilde{a}_i^{\top}  |S|  \tilde{a}_j \leq
\frac{1}{ \|\tilde{a}_i\|_1 \|\tilde{a}_j\|_1 } \|S\|_C \tilde{a}_i^{\top} E \tilde{a}_j   \\ 
&  =
\frac{1}{ \|\tilde{a}_i\|_1 \|\tilde{a}_j\|_1 } \tilde{a}_i^{\top} \|S\|_C \begin{bmatrix} n_j & \dots & n_j \end{bmatrix}^{\top} =
\frac{1}{ \|\tilde{a}_i\|_1 \|\tilde{a}_j\|_1 } n_i n_j \|S\|_C = \|S\|_C,
\end{align*}
where $|S|$ denotes the matrix with absolute values of $S$.

\end{proof}
\end{proposition}

\begin{proposition}\label{prop2}
For $0 < c < 1$ and $S^{(k)}$ corresponding to the fixed-point iteration process with $F(S)$ we have
\begin{equation*}
|| F(S^{(k)}) - F(S^{(k-1)})||_C \leq c ||S^{(k)} - S^{(k-1)}||_C
\end{equation*}
for any iteration $k$.
\begin{proof}
Indeed, 
\begin{align*}
\| F(S^{(k)}) - F(S^{(k-1)}) \|_C &= \| S^{(k+1)} - S^{(k)} \|_C = \| c~ \operatorname{off} (A^{\top}S^{(k)}A) + I - c~ \operatorname{off} (A^{\top}S^{(k-1)}A) - I  \|_C \\
&= c~ \|\operatorname{off} (A^{\top}( S^{(k)} - S^{(k-1)} )A) \|_C \leq c~ \| A^{\top}( S^{(k)} - S^{(k-1)} )A \|_C.
\end{align*}

From Proposition \ref{diagprop} 
$
c~ \| A^{\top}( S^{(k)} - S^{(k-1)} )A \|_C \leq c~ \| S^{(k)} - S^{(k-1)} \|_C
$.

\end{proof}

\end{proposition}

\begin{proposition} [see also \cite{Lizorkin2008}] \label{LizorkinProposition1}
Let $S^{(k)}$ be the result of the fixed-point iterative process with $S^{(0)} = I$ and $S$ be exact solution of $F(S) = S$. Then $|| S - S^{(k)}||_C \leq  c^{k+1}$.

\begin{proof}
Directly follows from Propositions \ref{diagprop} and \ref{prop2}.
\end{proof}

\end{proposition}

Note that if $1 \geq S_{ij}=S_{jj} \geq 0$ then $F(S)$ keeps the output symmetric, with nonnegative elements bounded by 1. Hence, the solution of the Simrank problem via the fixed-iteration method exists and corresponds to nonnegative similarity scores if the initial matrix $S^{(0)}$ is nonnegative and symmetric.

\section{Inspiration for the low-rank approach}

Simrank allows capturing similar pairs of nodes that correspond to large elements in $S$. Thus, it is less important to approximate elements with small scores because they are not similar. Hence, our main interest lies in element-wise approximation of $S$. 

The singular values of $S$ for many datasets decay slowly (see Figure~\ref{fig:eumail_fb_singvals}), and straight-forward approximation of $S$ by the truncated singular value decomposition has poor accuracy. In Figure \ref{fig:eumail_fb_singvals} we present our approximations via truncated SVD for \texttt{email-Eu-core} dataset with 1005 nodes in graph \cite{eumailsource} and for \texttt{ego-Facebook} with 4039 nodes \cite{fbsource}. One may note that a simple shift $S - I$ allows one to obtain a better approximation as the corresponding singular values are consistently smaller in magnitude, but their decay is still very slow. 

At the same time, there are intriguing results guaranteeing the existence of the low-rank approximations in terms of Chebyshev norm (see \cite{Alon2013, Townsend2019, garnaev1984widths} for details).

\begin{figure}[htbp!]
\centering
\includegraphics[width = 0.5\textwidth]{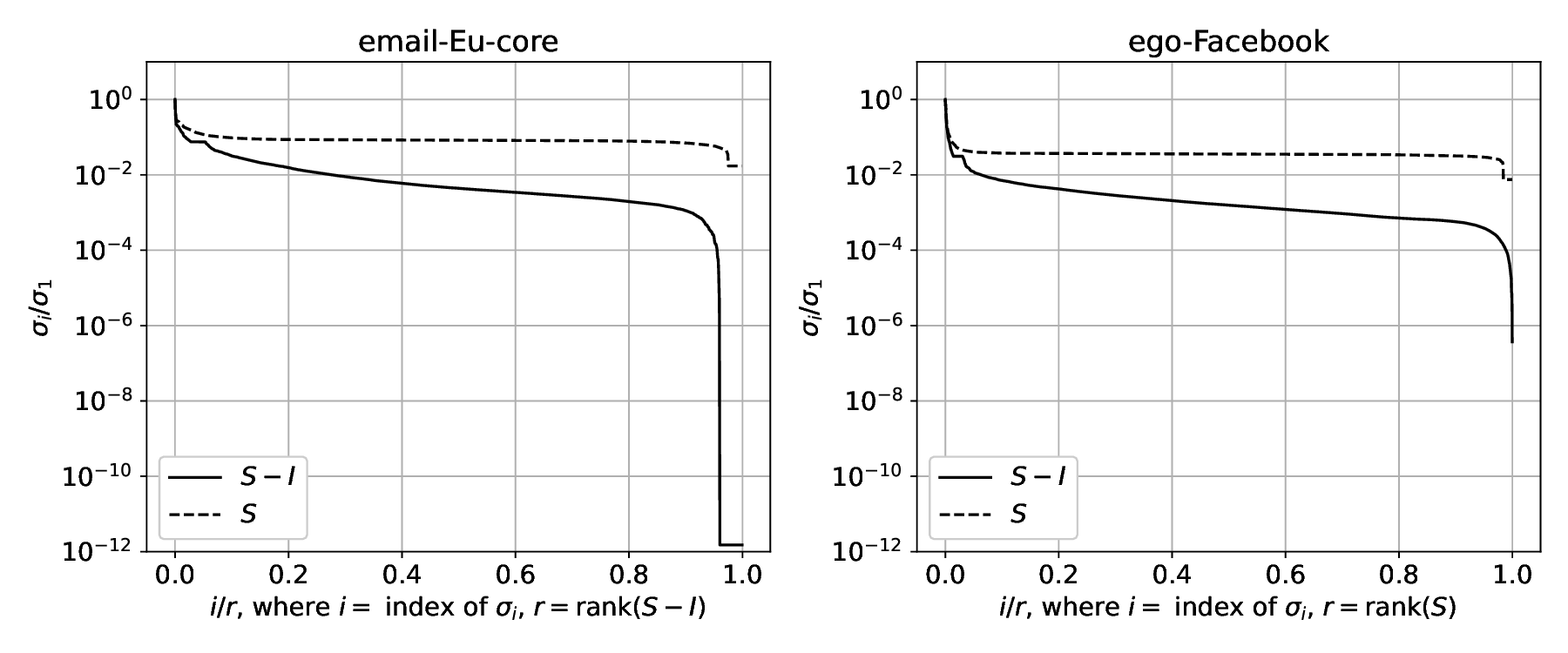}
\caption{Singular values of $S$ matrix and shift $S-I$ for \texttt{email-Eu-core dataset} with 1005 nodes in graph \cite{eumailsource} and for \texttt{ego-Facebook} with 4039 nodes \cite{fbsource} }
\label{fig:eumail_fb_singvals}
\end{figure}

\begin{figure}[htbp!]
\centering
\includegraphics[width = 0.5\textwidth]{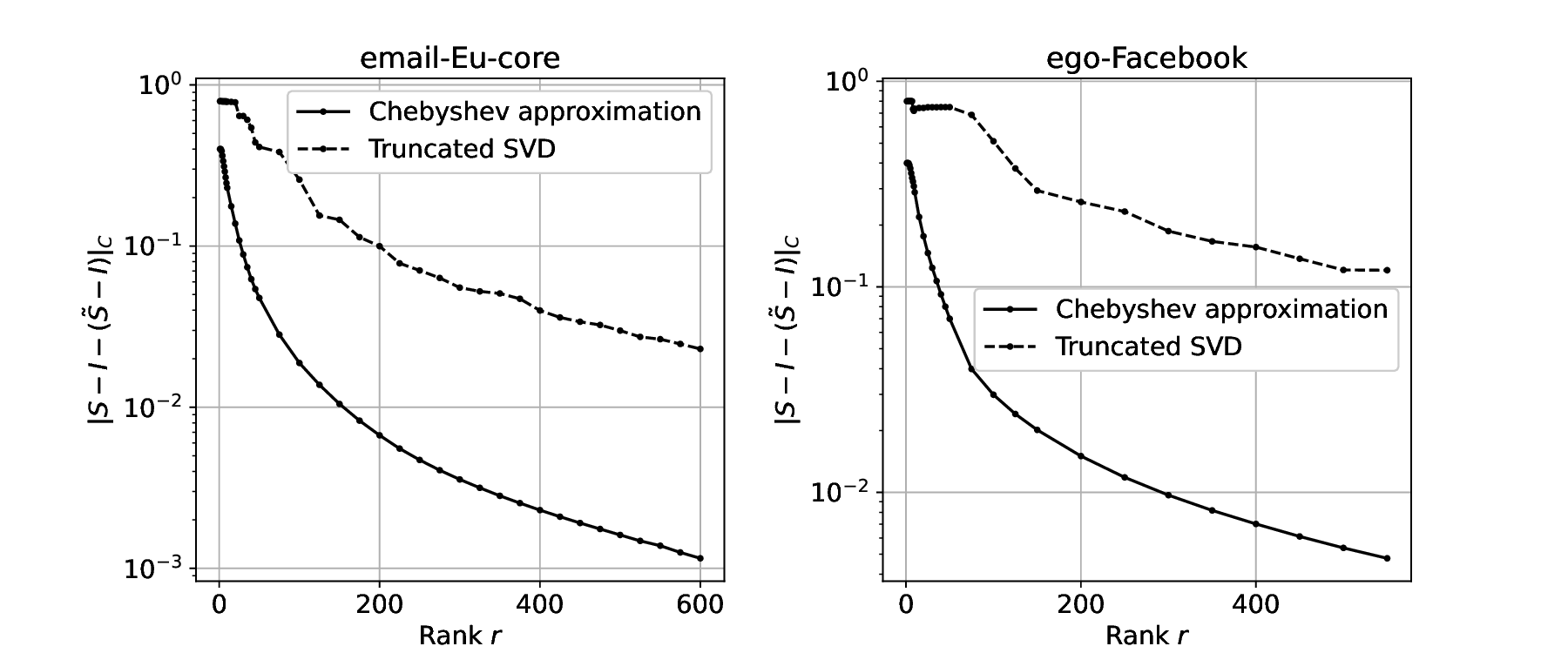}
\caption{Approximation error in Chebyshev norm for shift $S-I$ for \texttt{email-Eu-core} dataset with 1005 nodes in graph \cite{eumailsource} and for \texttt{ego-Facebook} with 4039 nodes \cite{fbsource} using truncated SVD and method from \cite{Zamarashkin2022} }
\label{fig:cheb_and_svd}
\end{figure}

\begin{definition}
Let $A \in \mathbb{R}^{n \times n}$, then
$
\varepsilon\operatorname{-rank}(A) = \min\{\operatorname{rank}(\tilde{A}) : \|A-\tilde{A}\|_C \leq \varepsilon \}.
$

\end{definition}

\begin{theorem}[Theorem 1.4 in \cite{Alon2013}] \label{AlonTheorem1}

Let $A \in \mathbb{R}^{n \times n}$, $A = A^{\top}$, positive semi-definite, $|A_{ij}| \leq 1$, $\forall i,j = 1, \dots,n$. Then $
\varepsilon\operatorname{-rank}(A) \leq \frac{9 \log n}{\varepsilon^2 - \varepsilon^3}.
$

\end{theorem}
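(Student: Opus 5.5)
Since $A$ is symmetric positive semi-definite, we would start from a Gram representation $A = V^{\top}V$ with $V=[v_1,\dots,v_n]$, $v_i \in \mathbb{R}^n$ (for instance, from the spectral decomposition $V = \Lambda^{1/2}U^{\top}$), so that $A_{ij} = \langle v_i, v_j\rangle$. The hypothesis $|A_{ij}|\le 1$ gives in particular $\|v_i\|_2^2 = A_{ii} \le 1$. The plan is then a Johnson--Lindenstrauss dimension reduction. We project all $v_i$ to $\mathbb{R}^k$ by a random Gaussian matrix $G \in \mathbb{R}^{k\times n}$ with i.i.d. $\mathcal{N}(0,1/k)$ entries, set $w_i = Gv_i$, and take $\tilde A_{ij} = \langle w_i,w_j\rangle$, i.e. $\tilde A = V^{\top}G^{\top}GV$. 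This matrix has rank at most $k$, and it suffices to show that $|\tilde A_{ij} - A_{ij}|\le\varepsilon$ for all pairs with positive probability when $k = \lceil 9\log n/(\varepsilon^2-\varepsilon^3)\rceil$.

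The key steps, in order, are the following.

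First, recall the standard JL concentration bound. For a fixed $x$, $\Pr\big[\,|\|Gx\|^2-\|x\|^2| > \varepsilon\|x\|^2\big] \le 2\exp\!\big(-k(\varepsilon^2/2-\varepsilon^3/3)/2\big)$, which we bound further by $2\exp(-k(\varepsilon^2-\varepsilon^3)/4)$.

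Second, express inner products through norms by polarization:
\[
\langle w_i,w_j\rangle = \tfrac14\big(\|G(v_i+v_j)\|^2 - \|G(v_i-v_j)\|^2\big).
\]
If both $v_i\pm v_j$ have their norms preserved up to a factor $1\pm\varepsilon$, then
\[
|\langle w_i,w_j\rangle - \langle v_i,v_j\rangle| \le \tfrac{\varepsilon}{4}\big(\|v_i+v_j\|^2+\|v_i-v_j\|^2\big) = \tfrac{\varepsilon}{2}\big(\|v_i\|^2+\|v_j\|^2\big) \le \varepsilon,
\]
where the last inequality uses $\|v_i\|^2\le1$.

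Third, apply a union bound over the at most $n^2$ vectors $v_i\pm v_j$, including the diagonal case $v_i$ itself. The failure probability is at most $2n^2\exp(-k(\varepsilon^2-\varepsilon^3)/4)$, and with $k = 9\log n/(\varepsilon^2-\varepsilon^3)$ this equals $2n^{2-9/4} < 1$ for $n\ge 2$. Hence some realization of $G$ works, and $\varepsilon\operatorname{-rank}(A)\le k$.

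The main obstacle is matching the constant in the exponent: the precise JL tail bound, and whether $\varepsilon^2-\varepsilon^3$ arises naturally or needs a slightly lossy rewriting. I would first try the moment-generating-function proof for $\chi^2_k$, namely $\Pr[\chi^2_k \ge (1+\varepsilon)k] \le ((1+\varepsilon)e^{-\varepsilon})^{k/2}$, together with $\log(1+\varepsilon)\le \varepsilon-\varepsilon^2/2+\varepsilon^3/3$. This gives the exponent $-k(\varepsilon^2/4-\varepsilon^3/6) \le -k(\varepsilon^2-\varepsilon^3)/4$, with an analogous bound for the lower tail. The factor $9$ leaves slack, so this should suffice. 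If it does not, an alternative is to project each $v_i$ separately and use the sharper inner-product concentration $\Pr[|\langle w_i,w_j\rangle-\langle v_i,v_j\rangle|>\varepsilon]$ directly.
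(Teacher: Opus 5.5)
\textbf{The approach is right, but the final union-bound arithmetic is wrong and leaves small $n$ uncovered; there is also an unaddressed rounding issue.}

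The paper gives no proof of this statement. It quotes Theorem 1.4 of Alon et al.\ and uses it only as motivation, so there is nothing in the paper to compare against. Your route is the standard Johnson--Lindenstrauss argument behind the cited result: Gram factorization $A=V^{\top}V$ with $\|v_i\|_2^2=A_{ii}\le 1$, a Gaussian projection, polarization, and a union bound. Steps 1 and 2 are correct. The $\chi^2_k$ moment-generating-function bounds give exponent $-k(\varepsilon^2/4-\varepsilon^3/6)$ for the upper tail and $-k\varepsilon^2/4$ for the lower tail. Both are at most $-k(\varepsilon^2-\varepsilon^3)/4$. The polarization estimate $\frac{\varepsilon}{2}(\|v_i\|^2+\|v_j\|^2)\le\varepsilon$ and the count of $n^2$ test vectors are also right.

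\textbf{The arithmetic slip.} You write $2n^{2-9/4}=2n^{-1/4}<1$ for $n\ge 2$, but this holds only for $n>16$. As written, the union bound proves nothing for $2\le n\le 16$.

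\textbf{The rounding issue.} $\varepsilon\operatorname{-rank}(A)$ is an integer, so showing it is at most $\lceil 9\log n/(\varepsilon^2-\varepsilon^3)\rceil$ does not give the stated real-valued bound. You need $k=\lfloor 9\ln n/(\varepsilon^2-\varepsilon^3)\rfloor$. Since $k\ge 9\ln n/(\varepsilon^2-\varepsilon^3)-1$, this multiplies the failure probability by at most $e^{(\varepsilon^2-\varepsilon^3)/4}\le e^{1/27}$. The argument then closes only for $n\ge 19$.

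\textbf{The fix for small $n$.} Use the trivial bound. Since $\max_{0<\varepsilon<1}(\varepsilon^2-\varepsilon^3)=4/27$, the right-hand side is at least $\frac{243}{4}\ln n$. For $2\le n\le 18$ this gives $\frac{243}{4}\ln n\ge n\ge\operatorname{rank}(A)\ge\varepsilon\operatorname{-rank}(A)$, taking $\tilde A=A$.

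You should also state the hypotheses your computation uses implicitly:
\begin{itemize}
\item $0<\varepsilon<1$, otherwise the denominator is nonpositive;
\item $\log$ is the natural logarithm, or any base at most $e$, since a larger base breaks the union bound;
\item $n\ge 2$, because for $n=1$ the claimed bound is $0$, and $A=[1]$ is not within $\varepsilon<1$ of the zero matrix.
\end{itemize}
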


Theorem \ref{AlonTheorem1}  allows us to expect the existence of the low-rank matrices approximating the SimRank matrix in the Chebyshev norm. 

\begin{proposition}
If $S^{(0)}$ is positive semi-definite matrix then $S^{(k)}$ from the fixed-point iteration method for $F(S)=S$ is also positive-semidefinite.
\begin{proof}
Let us consider the fixed-point iterations 
\begin{equation*}
S^{(k+1)} = c~ A^{\top}S^{(k)}A - c~ \operatorname{diag}(A^{\top}S^{(k)}A)+I.
\end{equation*}

$S^{(k)}$ is positive semi-definite $\Leftrightarrow $ $u^{\top}S^{(k)}u \geq 0$, $\forall$ $u \in \mathbb{R}^n$. Note, that
\begin{align*}
u^{\top}S^{(k+1)}u &= c~ u^{\top}A^{\top}S^{(k)}Au + u^{\top}(I- c~ \operatorname{diag}(A^{\top}S^{(k)}A))u \\
&= c(Au)^{\top}S^{(k)}(Au)+u^{\top} (I- c~ \operatorname{diag}(A^{\top}S^{(k)}A)) u.
\end{align*}

For the first term we note that $S^{(k)}$ is positive semi-definite, then $\forall u \in \mathbb{R}^{n}$ we have $(Au)^{\top}S^{(k)}(Au) \geq 0$. For the second term, we also see that
$$0 \leq \{ (I- c~ \operatorname{diag}(A^{\top}S^{(k)}A)) \}_{ij} \leq 1 \Rightarrow u^{\top} (I- c~ \operatorname{diag}(A^{\top}S^{(k)}A)) u \geq 0.$$
As far as both terms are nonnegative, we obtain
$
u^{\top}S^{(k+1)}u \geq 0
$.

\end{proof}
\end{proposition}

The specific numerical methods for approximation of matrices in Chebyshev norm are quite complicated and have high computational complexity \cite{Zamarashkin2022, Morozov2024, morozov2024refining}. We apply the alternating minimization method from \cite{morozov2024refining} for the relatively small Simrank matrices corresponding to several thousands of node pairs.

At first, we study the Simrank matrix for the \texttt{eumail-Eu-core} graph data with 1005 nodes whose singular values are depicted on the left panel of Figure \ref{fig:eumail_fb_singvals}. In this case, we obtain the approximations of rank 50 with accuracy higher than 0.1 using the chosen approach. At the same time, simple truncation of the singular value decomposition gives an error in Chebyshev norm of about 0.4 (as one can see in Figure \ref{fig:cheb_and_svd}). 

Further, we provide the same analysis for the Simrank matrix corresponding to the larger graph dataset containing 4039 nodes \cite{fbsource}. We present its singular values on the right panel of Figure \ref{fig:eumail_fb_singvals} and the accuracy of the low-rank approximations is shown on the right panel of Figure~\ref{fig:cheb_and_svd}. Our numerical observations agree with existing theory. 

Unfortunately, the approach from \cite{Morozov2024} can be utilized only for post-processing purposes. It also requires the storage of the whole matrix for organizing the calculations, which we would also like to avoid. Further, in this work, we propose heuristic iterative algorithms searching for acceptable low-rank parametrizations of the Simrank matrices from the very start of calculations.

\section{Two approaches to low-parametric solution}
\subsection{Alternating minimization}

Nowadays, there is a broad family of algorithms exploiting the idea of alternating minimization \cite{Morozov2024, morozov2024refining, Oseledets2023} and alternating projections \cite{Budzinskiy2025b, Budzinskiy2025c, Matveev2023} that are often being combined with miscellaneous heuristics. We approximate the solution of the Simrank problem as a low-parametric decomposition
\begin{equation}
\label{eq:s_represent}
\tilde{S} = I + UV^\top,
\end{equation}
where $\tilde{S}$ is approximation of $S$ and matrices $U, V \in \mathbb{R}^{n \times r}$ have to be found. Then, substituting \eqref{eq:s_represent} into \eqref{eq:simrankmaineqoff} and cancelling the identity on both sides yields
\begin{equation*}
UV^{\top} = c\,\operatorname{off}(A^{\top}UV^{\top}A) + c\,\operatorname{off}(A^{\top}A).
\end{equation*}
Define the constant matrix $B \equiv c\,\operatorname{off}(A^{\top}A)$,
so that the relation becomes
\begin{equation*}
UV^{\top} = c\,\operatorname{off}(A^{\top}UV^{\top}A) + B.
\end{equation*}

Let us define a linear operator $\mathcal{L}_V U: \mathbb{R}^{n \times r} \rightarrow \mathbb{R}^{n \times n}$ as $\mathcal{L}_V U \equiv UV^\top - c~ \operatorname{off} (A^{\top}UV^\top A)$,
where $V$ is fixed and operator $\mathcal{L}_U V : \mathbb{R}^{n \times r} \rightarrow \mathbb{R}^{n \times n}$ in similar manner but with fixed $U$. 

Following the ideology of alternating minimization, we propose to solve
\begin{equation}
||\mathcal{L}_V U-B||_F \rightarrow \underset{U}{\min} \label{eq:altmin_L_V}
\end{equation}
with some initial $V$ and obtain $\tilde{U}$. Then we should find $\tilde{V}$   
\begin{equation}
||\mathcal{L}_{\tilde{U}} V-B||_F \rightarrow \underset{V}{\min} \label{eq:altmin_L_U}
\end{equation}
using $\tilde{U}$ from previous step. All in all, we repeat this process solving \eqref{eq:altmin_L_V} and \eqref{eq:altmin_L_U} using the obtained $\tilde{U}$, $\tilde{V}$ as updates for $U$ and $V$ in the corresponding operators. Formally, the least squares problems \eqref{eq:altmin_L_V} and \eqref{eq:altmin_L_U}  can be solved exactly, but their algorithmic cost is unacceptable.

Setting $M = UV^{\top}$, we formulate the approximation as a minimization problem:
\begin{equation}
\label{eq:rank_r_minim}
\min_{\operatorname{rank} M=r} \| M - c\,\operatorname{off}(A^{\top} M A) - B \|_F.
\end{equation}
A natural fixed‑point iteration for \eqref{eq:rank_r_minim} is
\begin{equation*}
M^{(k+1)} = \operatorname{SVD}_{r}(c\,\operatorname{off}(A^{\top} M^{(k)} A) + B),
\end{equation*}
where $\operatorname{SVD}_{r}$ denotes the best rank‑$r$ approximation in the Frobenius norm. Computing a full SVD at every step can be expensive, especially for large $n$, and can be replaced with low-rank randomized spectral decomposition  according to \cite{Oseledets2016}. We present this trick in Algorithm \ref{alg:rsvd} and use it in our benchmarks.

We therefore adopt an alternating minimization strategy that avoids explicit SVD computation by keeping one of the low‑rank factors fixed. Suppose $U$ is held constant. With the current approximation $U (V^{(k)})^{\top}$, we seek a new $V^{(k+1)}$ that minimizes
\begin{equation*}
 \| U (V^{(k+1)})^{\top} - (c\,\operatorname{off}(A^{\top} U (V^{(k)})^{\top} A) + B) \|_F .
\end{equation*}
This linear least squares problem has a solution
\begin{equation*}
(V^{(k+1)})^{\top} = U^{\dagger}(c\,\operatorname{off}(A^{\top} U (V^{(k)})^{\top} A\big) + B),
\end{equation*}
where $U^{\dagger}$ denotes the Moore-Penrose pseudoinverse of $U$.
Similarly, if $V$ is fixed, we update $U$ by solving
\begin{equation*}
 \| V (U^{(k+1)})^{\top} - (c\,\operatorname{off}(A^{\top} V (U^{(k)})^{\top} A) + B^{\top}) \|_F \to \min_{U^{(k+1)}},
\end{equation*}
which gives
\begin{equation*}
(U^{(k+1)})^{\top} = V^{\dagger}(c\,\operatorname{off}(A^{\top} V (U^{(k)})^{\top} A\big) + B^{\top}).
\end{equation*}

Finally, we embed the alternating updates in an outer loop.  
Let $U^{(0,0)}, V^{(0,0)}\in\mathbb{R}^{n\times r}$ be initial guesses. For each outer iteration $m = 0,1,\dots,m_{\max}$ we perform a fixed number $k_{\max}$ of inner updates, first for $V$ while keeping $U$ fixed, then for $U$ while keeping the newly obtained $V$ fixed.

Set $U_{\text{fix}} = U^{(m,0)}$. For $k = 0,1,\dots,k_{\max}$ compute
\begin{equation*}
 (V^{(m,k+1)})^{\top} = U_{\text{fix}}^{\dagger}(c\,\operatorname{off}(A^{\top} U_{\text{fix}} (V^{(m,k)})^{\top} A\big) + B).
\end{equation*}
After the inner loop, set $V^{(m+1,0)} = V^{(m,k_{\max}+1)}$.

Then set $V_{\text{fix}} = V^{(m+1,0)}$. For $k = 0,1,\dots,k_{\max}$ compute
\begin{equation*}
 (U^{(m,k+1)})^{\top} = V_{\text{fix}}^{\dagger}(c\,\operatorname{off}(A^{\top} V_{\text{fix}} (U^{(m,k)})^{\top} A\big) + B^{\top}).
\end{equation*}
Finally, set $U^{(m+1,0)} = U^{(m,k_{\max}+1)}$. The process is repeated for increasing $m$ until $m = m_{\max}$. The final approximation of $S$ is
\begin{equation*}
 \tilde{S} = I + U^{(m_{\max} + 1,0)}(V^{(m_{\max} + 1,0)})^{\top}.
\end{equation*}
The detailed procedure is presented in Algorithm~\ref{alg:altopt}.

Despite this algorithm lacking a theoretical explanation for its convergence and approximation, it works surprisingly well within the studied datasets. It also has an important advantage of relatively low computational complexity, since the most expensive operation is pseudoinverse computation, but even this operation is not required at every iteration. 
\begin{algorithm}[H]
\caption{Alternating minimization}
\label{alg:altopt}

\textbf{Result}: matrices $U$, $V$

\begin{algorithmic}[1]
\State $U^{(0,0)}$, $V^{(0,0)}$ -- matrices with i.i.d. elements from the standard normal distribution
\State $B:= c~ \operatorname{off} (A^{\top}A)$
\For{$m = 0, \dots, m_{\text{max}}$}
 \State $U_{\text{fix}}:= U^{(m,0)}$
    \For{$k=0,\dots, k_{\text{max}}$}
        \State $(V^{(m,k+1)})^\top := {U_{\text{fix}}}^\dagger \left( c~ \operatorname{off} (A^{\top}U_{\text{fix}} (V^{(m,k)})^\top A) + B \right)$
    \EndFor
    \State $V^{(m+1,0)}:=V^{(m,k_{\text{max}}+1)}$, 
    $V_{\text{fix}} := V^{(m+1,0)}$
    \For{$k=0,\dots,k_{\text{max}} $}
        \State $(U^{(m,k+1)})^\top := {V_{\text{fix}}}^\dagger \left( c~ \operatorname{off} (A^{\top}V_{\text{fix}} (U^{(m,k)})^\top A) + B^\top \right)$
    \EndFor
    \State $U^{(m+1,0)}:=U^{(m,k_{\text{max}}+1)}$
    \If {Stoppingcriteria}
      \State $U := U^{(m+1,0)}$, $V := V^{(m+1,0)}$, Stop
    \EndIf
\EndFor
\end{algorithmic}
\end{algorithm}
From our numerical experiments, we see that taking initial matrices $U^{(0,0)}$, $V^{(0,0)}$ with random i.i.d elements from the standard normal distribution is crucial for obtaining a final good approximation. We obtain this fact only numerically and do not know any theoretical explanation of this phenomenon. We consider its theoretical justification as an interesting challenge for future research.

\subsection{Quadratic minimization}

Another approach utilizes the symmetric representation of $\tilde{S}$. We try to construct the solution as
\begin{equation}
\tilde{S} = I + \operatorname{off}(UU^{\top}). \label{eq:IoffUUT}
\end{equation}
This form also has $O(nr)$ memory cost. An interesting property of such data embedding is that the matrix $I + \operatorname{off}(UU^{\top})$ can actually be full-rank and still require less memory to store than a full $S$ matrix. One simple example is $U \in \mathbb{R}^{n}$:
\begin{equation*}
U = \begin{bmatrix} 0 & 0 & \dots & 0 & 1 & 2 \end{bmatrix}^{\top},
\end{equation*}
which leads to the full-rank matrix $S$. One may also easily find more complicated examples.

Hence, we call our decomposition low-parametric but not low-rank. Further, we form a minimization task allowing us to find the approximate solution of the SimRank problem in such a form. Substituting \eqref{eq:IoffUUT} into \eqref{eq:simrankmaineqoff},
\begin{equation*}
I + \operatorname{off}(UU^{\top}) = c~ \operatorname{off}(A^{\top}(I+\operatorname{off}(UU^{\top}))A)+ I,
\end{equation*}
\begin{equation*}
\operatorname{off}(UU^{\top}) = c~ \operatorname{off}(A^{\top} \operatorname{off}(UU^{\top}) A) + c~ \operatorname{off}(A^{\top}A).
\end{equation*}
We again denote $B \equiv c~ \operatorname{off}(A^{\top}A)$ and introduce the linear operator
\begin{equation*}
\Phi (X) \equiv \operatorname{off}(X) - c~ \operatorname{off}(A^{\top} \operatorname{off}(X) A).
\end{equation*}
%Now, our problem takes form
%\begin{equation}
%\Phi(UU^{\top})=B. \label{eq:Phi=B}
%\end{equation}
We introduce the residual functional $f:\mathbb{R}^{n \times r} \rightarrow \mathbb{R}$
\begin{equation*}
f(U) \equiv \| \Phi(UU^{\top}) - B \|_F^2,
\end{equation*}
and try to obtain the approximate low-parametric solution minimizing the value of $f(U)$. Thus, our problem becomes $f(U) \rightarrow \min_U$.

We call our approach  ``quadratic minimization'' because $f$ depends quadratically from $X=UU^{\top}$. Further, we set up an equation in order to solve our problem
\begin{equation}
\nabla f(U) = 0. \label{eq:nablaf=0}
\end{equation}
Since convexity analysis for $f(U)$ is quite complicated, we cannot guarantee to find global minima but hope to find at least a good local one. We solve \eqref{eq:nablaf=0} using the Newton method. The iterative process can be described by the following sequence
\begin{equation*}
U^{(k+1)} = U^{(k)} - J(U^{(k)})^{-1}[\nabla f(U^{(k)}]
\end{equation*}
where $J$ denotes the Jacobian operator of $\nabla f(U)$ (i.e., Hessian, which is actually a fourth-order tensor, but we avoid explicit construction of the tensor or its reshape to a matrix) and square brackets denote an argument of a linear operator. So $J(U^{(k)})^{-1}[\nabla f(U^{(k)})]$ means that inverse Jacobian is calculated at point $U^{(k)}$ and this operator is applied to $\nabla f(U^{(k)})$.

Instead of computing the derivatives component-wise, we use the framework of matrix derivatives. We denote $\langle X, Y \rangle \equiv \operatorname{tr} (Y^{\top}X)$ -- Frobenius inner product. Taking differential of $f(U)$ we see
\begin{align*}
df(U) 
&= d \langle \Phi (UU^{\top})-B , \Phi (UU^{\top})-B \rangle = 4 \langle \Phi^*(\Phi(UU^{\top})-B)U, dU \rangle
\end{align*}
where 
$\Phi^* (X) = \operatorname{off}(X) - c~ \operatorname{off}(A \operatorname{off}(X) A^{\top})$
is the adjoint of $\Phi$ in the Frobenius inner product.
It implies
\begin{equation}
\nabla f(U) = 4\Phi^*(\Phi(UU^{\top})-B)U. \label{eq:grad(f)}
\end{equation}
As we do not want to explicitly compute the Jacobian operator, we only seek the differential of the gradient: $J(U)[dU] = d(\nabla f(U))$. Then, applying the basic properties of differential, combined with the linearity of $\Phi$, we find that
\begin{align*}
d(\nabla f(U))
&= J(U)[dU] = d (4\Phi^*(\Phi(UU^{\top})-B)U) \\
&= 4 \Phi^*(\Phi( (dU)U^{\top} + U(dU)^{\top}))U + 4 \Phi^*(\Phi(UU^{\top})-B)dU.
\end{align*}
Summarizing our observation, we get
\begin{equation}
J(U)[X] = 4 \Phi^*(\Phi( XU^{\top} + UX^{\top}))U + 4 \Phi^*(\Phi(UU^{\top})-B)X. \label{eq:J(U)[dX]}
\end{equation}
Even though we seek to find the solution in a symmetric form, each step of the optimization takes more effort. The explicit Jacobian storage would require $O(n^2r^2)$ memory cells. Hence, we should solve the linear system
with Jacobian with internal iterative solver basing only on matrix by vector multiplications (e.~g. GMRES). We present a full set of operations for this method with Algorithm \ref{alg:quadmin_frame}.

\begin{algorithm}[H]
\caption{Quadratic minimization}
\label{alg:quadmin_frame}

\textbf{Result}: matrix $U$ such that $S \approx I + \operatorname{off}(UU^{\top})$

\begin{algorithmic}[1]
\State $U^{(0)}$ -- initial matrix, $k_{max}$ -- number of iterations
\State $B:= c~ \operatorname{off} (A^{\top}A)$
\State $f^{(0)} := f(U^{(0)}(U^{(0)})^{\top}) $
\For {$k = 0, \dots, k_{max}$}
 \State Solve $J(U^{(k)})[X] = \nabla f(U^{(k)})$
 \State $\Delta U^{(k+1)} := X$
 \State $U^{(k+1)} := U^{(k)} - \Delta U^{(k+1)}$ 
 \State $f^{(k+1)} := f(U^{(k+1)}(U^{(k+1)})^{\top})$
 \If {Stoppingcriteria}
      \State $U := U^{(k+1)}$
        \State Stop
    \EndIf
\EndFor
\end{algorithmic}
\end{algorithm}

Natural choice of the stopping criteria is to set some threshold for $f(U^{(k+1)})$ value. However, this approach requires to form full $\Phi(UU^{\top})$ matrix and violates our strategy to avoid $O(n^2)$ memory consumption. In the current study, we use this approach to control the iterations more precisely. One may try stopping the iterations if the shift of $\| \Delta U^{(k+1)} \|$ achieves a small value below the threshold. Steps 3 and 8 of an Algorithm \ref{alg:quadmin_frame} can be omitted if computation of $f$ is not necessary.

One can organize computations of the gradient \eqref{eq:grad(f)} and Jacobian operator \eqref{eq:J(U)[dX]} without forming the dense $n \times n$ matrices at any step of the algorithm (we assume that the normalized adjacency matrix $A$ is sparse), using the fact that for some matrices $X$,$Y$,$Z$ the value $\operatorname{diag}(XY)Z$ can be calculated effectively by multiplying $k$-th row of $Z$ by $x_{k,:}y_{:,k}$. Let us denote $\Phi^*(\Phi(XY))Z$ as $\operatorname{ffmp} (X,Y,Z)$ ($X$, $Y$, $Z$ -- some arbitrary matrices) and $\operatorname{diag}(XY)Z$ as $\operatorname{dmmp}(X,Y,Z)$. Note that both of the introduced $\operatorname{diag}(XY)Z$, $\operatorname{dmmp}(X,Y,Z)$ can be calculated effectively. Then we see 

\begin{equation*}
\Phi^*(\Phi(XY)) 
= \underbrace{\operatorname{off}(XY)- c~ \operatorname{off}(A^{\top} \operatorname{off}(XY)A)}_{T_1}
 - c~ \underbrace{\operatorname{off}(A \operatorname{off}(XY) A^{\top})}_{T_2}
 + c^2~  \underbrace{\operatorname{off}(A \operatorname{off}(A^{\top} \operatorname{off} (XY)A)A^{\top})}_{T_3}.
\end{equation*}
In more detail we look at the pieces of $\Phi^*(\Phi(XY))$ starting from $T_1$
\begin{multline*}
T_1Z = X(YZ) - \operatorname{dmmp}(X,Y,Z) - c~ (A^{\top}X)((YA)Z) + c~ A^{\top} \operatorname{dmmp} (X,Y,A)Z \\
 + c~ \operatorname{dmmp} (A^{\top}X, YA,Z) - c~ \operatorname{dmmp} (A^{\top}, \operatorname{dmmp} (X,Y,A),Z).
\end{multline*}
Then, we establish the equation for $T_2$
\begin{equation*}
T_2Z = (AX)((YA^{\top})Z) - A \operatorname{dmmp} (X, Y, A^{\top})Z - \operatorname{dmmp}(AX, YA^{\top}, Z)
 + \operatorname{dmmp}(A, \operatorname{dmmp} (X, Y, A^{\top}),Z),
\end{equation*}
and finally for $T_3$
\begin{multline*}
T_3Z
= (A(A^{\top}X))((YA)(A^{\top}Z)) 
 - A(A^{\top} ( \operatorname{dmmp} (X,Y,A)(A^{\top}Z))) 
 - A \operatorname{dmmp} (A^{\top}X, YA, A^{\top}Z) \\
 + A \operatorname{dmmp} (A^{\top}, \operatorname{dmmp}(X, Y, A), A^{\top}Z) 
 - \operatorname{dmmp} (A(A^{\top}X), (YA)A^{\top}, Z) \\
 + \operatorname{dmmp} (A, A^{\top} \operatorname{dmmp} (X,Y,A)A^{\top}, Z) 
 + \operatorname{dmmp} (A, \operatorname{dmmp} (A^{\top}X, YA, A^{\top}), Z) \\
 - \operatorname{dmmp} (A, \operatorname{dmmp} (A^{\top}, \operatorname{dmmp} (X,Y,A), A^{\top}, Z).
\end{multline*}
All in all, we denote
\begin{equation*}
\operatorname{ffmp} (X,Y,Z) = T_1 - c~ T_2 + c^2~  T_3.
\end{equation*}
In these notations, we obtain the equation for the gradient
\begin{equation*}
\nabla f(U) 
= 4  \Phi^*(\Phi(UU^{\top})-B)U
= 4 \left( \Phi^*(\Phi(UU^{\top})) - \Phi^*(B)U \right)
= 4 \left( \operatorname{ffmp} (U, U^{\top}, U) - \Phi^*(B)U \right)
\end{equation*}
and for the multiplication of the Jacobian matrix by a vector
\begin{multline*}
J(U)[X] 
= 4 \Phi^*(\Phi( XU^{\top} + UX^{\top}))U + 4 \Phi^*(\Phi(UU^{\top})-B)X \\
= 4 \left( \Phi^*(\Phi(XU^{\top}))U + \Phi^*(\Phi(UX^{\top}))U + \Phi^*(\Phi(UU^{\top}))X - \Phi^*(B)X \right) \\
= 4 \left( \operatorname{ffmp} (X,U^{\top},U) + \operatorname{ffmp} (U, X^{\top}, U) + \operatorname{ffmp} (U,U^{\top},X) - \Phi^*(B)X \right),
\end{multline*}
where $\Phi^*(B)$ is pre-computed. To solve $J(U^{(k)})[X] = \nabla f(U^{(k)})$ we use the GMRES solver. From our numerical experiments, we see the best results with a number of iterations between $10$ and $20$ within the studied datasets, and each dataset has its own optimal number of GMRES iterations. Detailed analysis of solver type and parameters that have an influence on the solution is a promising topic for further research.

\section{Numerical results}

In this section, we show our numerical results. We use datasets from the Stanford Large Network Dataset Collection (SNAP). All algorithms are implemented in Python 3 with NumPy and SciPy libraries. We are mainly interested in approximation in terms of the Chebyshev norm, but also introduce a special metric for the average number of the top-N correctly restored baseline similarities in terms of the initial Simrank model. Consider a graph with $n$ nodes (and correspondingly $A \in \mathbb{R}^{n \times n}, S \in \mathbb{R}^{n \times n}$). For a given $i$-th node,
\begin{equation*}
\psi_{i}(N) = \left| \mathcal{I}_{i}(N) \cap \mathcal{\tilde{I}}_{i}(N) \right|
\end{equation*}
where $\mathcal{I}_{i}(N)$ denotes a set of indices of top-N similar nodes for $i$-th node. $\mathcal{\tilde{I}}_{i}(N)$ denotes the same for matrix $\tilde{S}$ that approximates $S$.
Thus, we use
\begin{equation*}
\Psi(N) = \sum_{i=1}^n \frac{\psi_{i}(N)}{Nn}
\end{equation*} 
as a way to measure ``how much similar nodes are being conserved in approximation''. Such a function gives a more reliable way to measure the quality of parametrization since good approximation in terms of Chebyshev norm does not always means approximation of the similar elements in $S$ that are interesting for recommendations. We also test the alternative low-rank approach proposed in \cite{Oseledets2016} for the benchmark purpose (see Algorithm \ref{alg:rsvd}).

\begin{algorithm}[H]
\caption{Randomized SVD iterations}
\label{alg:rsvd}

\textbf{Result}: matrix $M = UU^\top$ such that $S \approx I + M$

\begin{algorithmic}[1]
\State $U^{(0)} \leftarrow \operatorname{SVD} \left( c~\operatorname{off} (A^\top A) \right) $
\For{$k = 0, \ldots ,k_{ \text{max}}$ }
    \State $\Omega \in \mathbb{R}^{n \times (r+p)}$ with i.i.d. elements from the standard normal distribution
    \State $M \Omega$ compute the tall sketch effectively 
    \State $QR \leftarrow M \Omega$ obtain thin QR decomposition
    \State $M := QQ^\top M_{\text{prev}}$ perform projection
    \State $U^{(k)} \leftarrow \operatorname{SVD} (Q^\top M_{\text{prev}})$
    \State $M_{\text{prev}} := M$
    \If {Stoppingcriteria}
        \State Stop
    \EndIf
\EndFor
\end{algorithmic}
\end{algorithm}

We present the benchmarks of the discussed approaches in Figure \ref{fig:chebtop_fb_wiki} for the \texttt{ego-Facebook} network dataset with 4039 nodes \cite{fbsource} (two plots on the left) and for \texttt{wiki-vote} data with 7115 nodes  (two plots on the right). We also test the alternating minimization for the high-energy physics citation network with 34546 nodes and present our results in Table~\ref{tab:hepph_article}. For all cases, we pre-calculate the full dense baseline Simrank solutions using the fixed-point iteration method with accuracy level $10^{-12}$ (in all cases, we exploit the sparsity of the corresponding adjacency matrices).

\begin{figure}[htbp!]
\centering
\includegraphics[width = 0.48\textwidth]{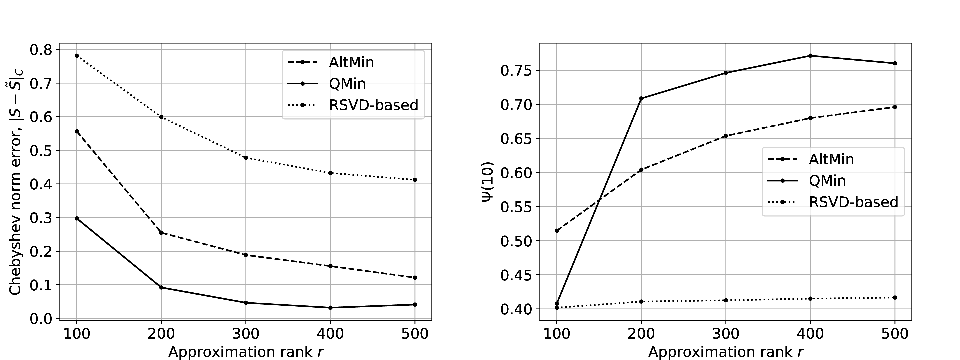}
\includegraphics[width = 0.48\textwidth]{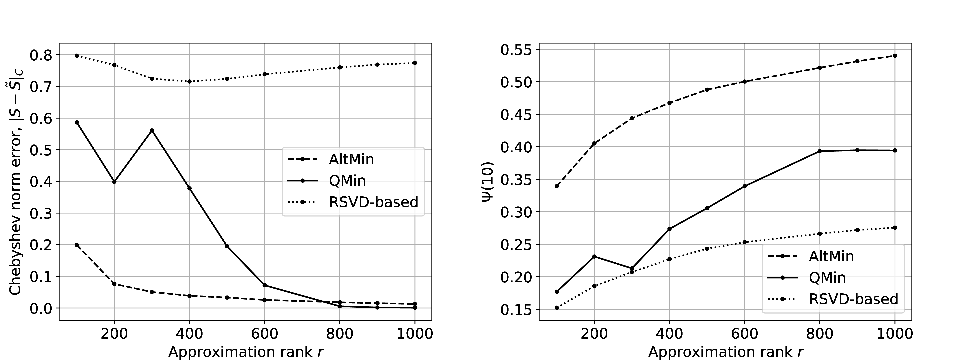}
\caption{Chebyshev norm error and $\Psi(10)$ for \texttt{ego-Facebook} dataset \cite{fbsource} (two plots on the left).
Chebyshev norm error and $\Psi(10)$ for \texttt{wiki-Vote} \cite{wikivotesource} (two plots on the right). }
\label{fig:chebtop_fb_wiki}
\end{figure}

At first, we see that novel alternating minimization and quadratic optimization methods outperform the RSVD-based method (see Algorithm~\ref{alg:rsvd}) in terms of correct top recommendations (the right panel of Figure~\ref{fig:chebtop_fb_wiki}). At the same time, we see that RSVD-based iterations do not approximate the solution in terms of Chebyshev norm at all, whether our approaches allow us to reach the error in Chebyshev norm less than 0.1 for rank values about 200. It means that the accurate numerical solution can be found in the low-parametric formats with a compression level of about 5 times. This level of compression is worse than offline (see Figure~\ref{fig:cheb_and_svd}) approximation with the method from \cite{morozov2024refining}, but the latter can be applied only to already computed full $n \times n$ Simrank matrix, which needs to be stored, and our methods allow to compute the solution directly in low-parametric form.

Further, we see that the alternating optimization outperforms the quadratic optimization for the larger dataset with 7115 nodes in terms of both error in Chebyshev norm and recommendations accuracy $\Psi(10)$. Also, we note that the error in Chebyshev norm and $\Psi(10)$ are perfectly correlated. However, for matrices with many large elements that are close to each other in terms of the basic Simrank equations, the relatively small error around 0.1 in terms of Chebyshev norm for the rank 200 gives quite a poor value of $\Psi(10)$. We finally break through the level of 50\% top-10 recommendations per each user, only for the value of rank about 800, because it contains many large elements that are close to each other in terms of the Simrank model. Anyway, even such a value of rank allows obtaining the approximate solution with a compression by more than 4.4 times.

\begin{table}
\centering
\begin{tabular}{c|c|c|c|c|c|c|c}
\hline
rank & 2000 &  2250 & 2500 & 2750 & 3000 & 3250 & 3500 \\
\hline 
$||S - \tilde{S}||_C$ & 0.59881289 & 0.55076561 & 0.44502145 & 0.4388728 & 0.41307087 & 0.40972939 & 0.4  \\
\hline
$\Psi(10)$ & 0.5053436 & 0.53758178 & 0.55900828 & 0.57520408 & 0.58949227 & 0.60304522 & 0.61528976 \\
\hline
\end{tabular}
\caption{Chebyshev norm error and $\Psi(10)$ for the high-energy physics citation graph \cite{snapnets}.}
\label{tab:hepph_article}
\end{table}

We present the last test for the high-energy physics citation graph \cite{snapnets} containing 34546 nodes with Table~\ref{tab:hepph_article} only for the alternating minimization method, because other approaches take too much time. We reach $\Psi(10) \approx 0.6$ for setting 3250 as the value of the rank, meaning that the approximation of the Simrank matrix can reach the compression level by more than 5.3 times.

\section{Conclusion and future work}

In this work, we propose two low-parametric approaches for the storage and computation of the SimRank matrices. We show that similarity matrices can be well-approximated in terms of the Chebyshev norm. We present benchmarks of our algorithms on real datasets from the open SNAP collection and observe that SimRank matrices can be approximated in the compressed form with significantly fewer parameters from the very beginning of the iterative process.

Although our results are promising for both novel methods, there remain numerous directions for further research: (a) both approaches require additional theoretical justification, (b) the computational complexity should be reduced, (c) possible convergence criteria need to be investigated in more detail, and (d) parallel structure should also be studied.

\section*{Funding}

Sergey Matveev was supported by Russian Science Foundation grant 25-11-00392 (\url{https://rscf.ru/project/25-11-00392/}). Sergey Matveev is grateful to Stanislav Budzinskiy for fruitful discussions on theory about low-rank approximations in Chebyshev norm. Our codes are openly available at \url{https://github.com/ep0berezin/Simrank_LPM}.

\bibliographystyle{plainurl}
\bibliography{bibliography}

\end{document}